\renewcommand{\epsilon}{\varepsilon}
\renewcommand{\phi}{\varphi}
\newtheorem{corollary}{Corollary}
\newtheorem{lemma}{Lemma}
\newtheorem{theorem}{Theorem}
\renewcommand\subsection{\@startsection{subsection}{2}%
  \z@{.5\linespacing\@plus.7\linespacing}{.1\linespacing}%
  {\itshape}}
\title[Strict convexity for GJE in dimension two]{Strict convexity and $C^1$ regularity of solutions to generated Jacobian equations in dimension two}
\author[C. Rankin]{Cale Rankin}
\address{Australian National University}
\email{cale.rankin@anu.edu.au}
\thanks{This research is supported by an Australian Government Research Training Program (RTP) Scholarship.}
\subjclass[2010]{35J96\and 35J66}
\begin{document}

\maketitle

\begin{abstract}
  We present a proof of strict $g$-convexity in 2D for solutions of generated Jacobian equations with a $g$-Monge--Amp\`ere measure bounded away from 0. Subsequently this implies $C^1$ differentiability in the case of a $g$-Monge--Amp\`ere measure bounded from above. Our proof follows one given by Trudinger and Wang in the Monge--Amp\`ere case. Thus, like theirs, our argument is local and yields a quantitative estimate on the $g$-convexity. As a result our differentiability result is new even in the optimal transport case: we weaken previously required domain convexity conditions.  Moreover in the optimal transport case and the Monge--Amp\`ere case our key assumptions, namely A3w and domain convexity,  are necessary.  
% \subclass{35J96\and 35J66}
\end{abstract}

\section{Introduction}
Generated Jacobian equations are PDEs of the form
\begin{equation}
  \label{eq:gje}
   \det DY(\cdot,u,Du) = \psi(\cdot,u,Du),
\end{equation}
where the vector field $Y$ has a particular structure. This class of equations includes the Monge--Amp\`ere equation and the Jacobian equation from optimal transport as special cases. Precise statements concerning the structure of $Y$ are given in Section \ref{sec:genfunc}. For now we state our purpose is to show, for generalised notions of convexity, that solutions of
\begin{align}
  \label{eq:ineq}
   \det DY(\cdot,u,Du) \geq c > 0 
\end{align}
on a convex domain $\Omega \subset\mathbf{R}^2$ are strictly convex. By considering an analogue of the Legendre transform we obtain that when \eqref{eq:ineq} is instead a bound from above and $u$ satisfies the  second boundary value problem
\begin{align}
  \label{eq:2bvp}
  Y(\cdot,u,Du)(\Omega) = \Omega^*,
\end{align}
 for a domain $\Omega\subset\mathbf{R}^2$ and a convex domain $\Omega^*\subset\mathbf{R}^2$, then $u$ is $C^1$.

 The class of $Y$ we work with, those obtained from a generating function, embraces applications in optimal transport and geometric optics, whilst allowing the use of a large family of techniques developed for the study of the Monge--Amp\`ere equation. Thus these equations, first studied by Trudinger \cite{Trudinger14}, provide a good combination of applicability and tractability. For example, our proof follows the corresponding proof for the Monge--Amp\`ere equation as given by Trudinger and Wang \cite[Remark 3.2]{TrudingerWang08}. Though there the result is originally due to Alexandrov \cite{Alexandroff1942} and Heinz \cite{Heinz1959}.

 The work most relevant to ours is that of Figalli and Loeper \cite{FigalliLoeper09} who dealt with optimal transport, that is when $Y = Y(\cdot,Du)$. They proved the $C^1$ differentiability of solutions in two dimensions under a bound from above on the $c$-Monge--Amp\`ere measure. They assumed a uniformly $c$-convex source $\Omega$ and a strictly $c^*$-convex target $\Omega^*$. Thanks largely to the maturity of the relevant convexity theory we are able to reduce these convexity conditions. Our $C^1$ result requires no convexity condition on the source and only $g^*$-convexity on the target. This condition is necessary even for the Monge--Amp\`ere equation. 

 Our results are inherently two dimensional --- in higher dimensions strict convexity does not hold under only a bound from below.  However one of the key applications of generated Jacobian equations (GJE) is geometric optics and this takes place in two dimensions.  In higher dimensions a two sided bound on the Monge--Amp\`ere measure is required for strict convexity and differentiability. Here the relevant work is that of Caffarelli \cite{Caffarelli90} for the Monge--Ampere equation, Chen, Wang \cite{ChenWang16}, Figalli, Kim, McCann \cite{FKM13}, Guillen, Kitagawa \cite{GuillenKitagawa15}, and V\'etois \cite{Vetois15} for optimal transport, and Guillen, Kitagawa \cite{GuillenKitagawa17} for GJE.

 Our plan is to define generating functions, GJE, and the related convexity notions in Section \ref{sec:genfunc}. Here we also state precisely our main results: Theorem \ref{thm:main} and Corollary \ref{cor:c1}. In Section \ref{sec:main-lemma} we state a versatile differential inequality (essentially taken from \cite{Trudinger14,Trudinger20}) whose proof we relegate to the appendix. We prove strict convexity in Section \ref{sec:strict-convexity-2d} and conclude with $C^1$ differentiability in Section \ref{sec:c1}.

 \subsection*{Acknowledgements}
Thanks to Neil Trudinger who suggested extending the proof in \cite{TrudingerWang08} to generated Jacobian equations.

\section{Generating functions and GJE}
\label{sec:genfunc}
Generated Jacobian equations are equations of the form \eqref{eq:gje} where $Y$ derives from a generating function (defined below). This requirement allows us to develop a framework extending convexity theory. The material below is largely due to Trudinger \cite{Trudinger14} with other presentations in \cite{GuillenKitagawa17,Jeong20,Jhaveri17,JiangTrudinger18,LiuTrudinger16,Rankin20}.  Guillen's survey article \cite{Guillen19} is a good introduction and lists the 2D theory we develop here as an open problem. Throughout it is helpful to keep in mind the cases $g(x,y,z) = x\cdot y-z$ and  $g(x,y,z) = c(x,y)-z$ where $c$ is a cost function from optimal transport. In these settings $g$-convexity  simplifies to standard convexity and the cost convexity of optimal transport respectively.

\subsection{Generating functions}
\label{sec:generating-functions}
The structure of a particular generated Jacobian equation derives from a generating function. We denote this function by $g$, and require it satisfy the following assumptions.\\
  \textbf{A0.} $g \in C^4(\overline{\Gamma})$ where $\Gamma \subset \mathbf{R}^n\times\mathbf{R}^n\times\mathbf{R}$ is a bounded domain satisfying that the projections 
  \[  I_{x,y}:=\{z ; (x,y,z) \in \Gamma\}\]
are open intervals. Moreover we assume there are domains $\Omega,\Omega^* \subset \mathbf{R}^n$  such that for all $x \in \overline{\Omega}, y \in \overline{\Omega^*}$ we have that $ I_{x,y}$ is nonempty.\\
  \textbf{A1.} For all $(x,u,p) \in \mathcal{U}$ defined as
  \[\mathcal{U} = \{(x,g(x,y,z),g_x(x,y,z)); (x,y,z) \in \Gamma\},\]
  there is a unique $(x,y,z)\in\Gamma$ such that
\begin{align*}
  g(x,y,z) = u \quad \text{ and }\quad g_x(x,y,z) = p.
\end{align*}
\textbf{A2. } On $\overline{\Gamma}$ there holds $g_z<0$ and the matrix E with entries
\[ E_{ij}:= g_{x_i,y_j}-(g_z)^{-1}g_{x_i,z}g_{y_j},\]
satisfies $\det E \neq 0$.

\subsection{Generated Jacobian equations}
\label{sec:gener-jacob-equat}
Assumption A1 allows us to define mappings $Y:\mathcal{U}\rightarrow\mathbf{R}^n$ and $Z:\mathcal{U} \rightarrow \mathbf{R}$ by requiring they solve
\begin{align}
 \label{eq:yzdef1} g(x,Y(x,u,p),Z(x,u,p)) &= u,\\
 \label{eq:yzdef2} g_x(x,Y(x,u,p),Z(x,u,p)) &= p.
\end{align}
We call a PDE of the form
  \begin{equation}
    \label{eq:3gje}
\tag{1} \det DY(\cdot,u,Du) = \psi(\cdot,u,Du),
  \end{equation}
  a \textit{generated Jacobian equation} provided  $Y$ derives from solving \eqref{eq:yzdef1} and \eqref{eq:yzdef2} for some generating function.

  Generated Jacobian equations may be rewritten as Monge--Amp\`ere type equations. Suppose $u \in C^2(\Omega)$ satisfies \eqref{eq:3gje}. Then differentiating \eqref{eq:yzdef1} and \eqref{eq:yzdef2} evaluated at $(x,Y(x,u,Du),Z(x,u,Du))$ yields
\begin{align*}
  Du &= g_x+g_yDY+g_zDZ,\\
 \text{ and }\quad D^2u &= g_{xx}+g_{xy}DY+g_{xz}DZ.
\end{align*}
Since the first equation with \eqref{eq:yzdef2} implies
\begin{equation}
  \label{eq:matederive1}
  D_jZ = -\frac{1}{g_z}g_{y_k}D_jY^k,
\end{equation}
the second can be solved for $DY$ by which we have
\begin{align}
\label{eq:yallderiv}  DY(\cdot,u,Du) = E^{-1}[D^2u-g_{xx}(Y(\cdot,u,Du),Z(\cdot,u,Du))],
\end{align}
with $E$ as in A2. Thus $C^2$ solutions of \eqref{eq:gje}  solve
\begin{equation}
  \label{eq:mate}
   \det [D^2u-A(\cdot,u,Du)] = B(\cdot,u,Du),
\end{equation}
for
\begin{align*}
  A(\cdot,u,Du) &= g_{xx}(Y(\cdot,u,Du),Z(\cdot,u,Du)),\\
  B(\cdot,u,Du) &= \det E(\cdot,u,Du)\psi(\cdot,u,Du).
\end{align*}
This PDE is elliptic when $D^2u > g_{xx}(Y(\cdot,u,Du),Z(\cdot,u,Du))$ as matrices. A necessary condition for ellipticity is $B>0$.

\subsection{$g$-convex functions}
\label{sec:g-convex-functions}
We introduce an analogue of convexity theory in which the generating function plays the role of a supporting hyperplane. We say a function $u:\Omega  \rightarrow \mathbf{R}$ is $g$\textit{-convex} provided for all $x_0\in \Omega$ there is $y_0,z_0$ such that
  \begin{align}
\label{eq:3gconvdef1}    g(x_0,y_0,z_0) &= u(x_0),\\
 \label{eq:3gconvdef2}   g(x,y_0,z_0) &\leq u(x) \quad \text{ for all } x\neq x_0,x \in \Omega.
  \end{align}
In this case $g(\cdot,y_0,z_0)$ is called a $g$\textit{-support} at $x_0$. We say $u$ is \textit{strictly} $g$\textit{-convex} if the inequality in \eqref{eq:3gconvdef2} is strict, that is, any given $g$-support only touches $u$ at a single point. Note when $u$ is differentiable, since $u(\cdot)-g(\cdot,y_0,z_0)$ has a minimum at $x_0$, we have $Du(x_0) = g_x(x_0,y_0,z_0)$. This combined with \eqref{eq:3gconvdef1} is equivalent to \eqref{eq:yzdef2} and \eqref{eq:yzdef1} so that $y_0 = Y(x_0,u(x_0),Du(x_0))$ and $z_0 = Z(x_0,u(x_0),Du(x_0))$. Moreover, if $u$ is $C^2$, again since $u(\cdot)-g(\cdot,y_0,z_0)$ has a minimum at $x_0$, we have $D^2u(x_0) \geq  g_{xx}(x_0,y_0,z_0)$ and \eqref{eq:mate} is degenerate elliptic.  However when $u$ is not differentiable \eqref{eq:3gconvdef1} and \eqref{eq:3gconvdef2} may hold for more than one $y_0,z_0$. The set of $y_0$ for which there is $z_0$ such that \eqref{eq:3gconvdef1} and \eqref{eq:3gconvdef2} hold is denoted $Y_u(x_0)$. Similarly for $Z_u(x_0)$. When both are a singleton we identify these sets with their single element.

\subsection{Alexandrov solutions}
\label{sec:aleks-solut}
The mapping $Y_u$ allows us to define a notion of Alexandrov solution. A $g$-convex function $u$ is called an \textit{Alexandrov solution} of \eqref{eq:3gje} on $\Omega$ provided for every Borel $E \subset \Omega$
\[ |Y_u(E)| = \int_{E} \psi(x,u,Du) \ dx.\]
Since $g$-convex functions are locally semi-convex and thus differentiable almost everywhere the integrand on the right-hand side is well defined almost everywhere. We see, via the change of variables formula, that when $u$ is $C^2$ and $Y(\cdot,u,Du)$ is a $C^1$ diffeomorphism Alexandrov solutions are classical solutions. Moreover in the case of inequality \eqref{eq:ineq} the notion of Alexandrov solution is that for every $E\subset \Omega$
\[ |Y_u(E)| \geq c|E|.\]
Here we call the measure $\mu$ defined on Borel $E \subset \Omega$ by $\mu(E) = |Y_u(E)|$ the $g$\textit{-Monge--Amp\`ere measure}.

\subsection{Domain Convexity}
\label{sec:domain-convexity}
With the goal of introducing a notion of domain convexity we introduce a generalisation of line segments. A collection of points $\{x_\theta\}_{\theta\in[0,1]}$ is called a \textit{$g$-segment} with respect to $y\in\Omega^*,z\in \cap_{\theta}I_{x_\theta,y}$  joining $x_0$ to $x_1$ provided
\[\frac{g_y}{g_z}(x_\theta,y,z) = \theta \frac{g_y}{g_z}(x_1,y,z) + (1-\theta)\frac{g_y}{g_z}(x_0,y,z). \]
Using this we say a set $\Omega$ is \textit{$g$-convex} with respect to $(y,z)$ provided for each $x_0,x_1\in\Omega$ the above $g$-segment is contained in $\Omega$. The $g$-segment is unique via condition A1$^*$ in Section \ref{sec:dual-gener-funct}.  If $A\subset \mathbf{R}^n$ and $B\subset\mathbf{R}$ we say $\Omega$ is $g$-convex with respect to $A\times B$ if $\Omega$ is $g$-convex with respect to every $(y,z) \in A\times B.$

\subsection{Conditions for regularity}
\label{sec:cond-regul}
Domain convexity and a Monge--Amp\`ere measure bounded below is necessary and sufficient for strict convexity in 2D in the Monge--Amp\`ere case. In the optimal transport case we need the now well known A3w condition. This condition was introduced for optimal transport by Ma, Trudinger, and Wang \cite{MTW05} and generalised to GJE by Trudinger. For GJE we use an additional condition A4w (due to Trudinger \cite{Trudinger14}).

\textbf{A3w.} A generating function $g$ is said to satisfy the condition A3w provided for all $\xi,\eta\in\mathbf{R}^n$ with $\xi \cdot \eta = 0$ and $(x,u,p) \in \mathcal{U}$ there holds
\begin{equation}
  \label{eq:A3}
  D_{p_kp_l}A_{ij}(x,u,p)\xi_i\xi_j\eta_k\eta_l \geq 0.
\end{equation}
\textbf{A4w.} The matrix $A$ is non-decreasing in $u$, in the sense that for any $\xi \in \mathbf{R}^n$
\[ D_uA_{ij}(x,u,p)\xi_i\xi_j \geq 0.\]

In light of Lemma \ref{lem:diffineq} (\S \ref{sec:main-lemma}) we regard A3w and A4w as tools for controlling how $g$-convex functions separate from their supporting hyperplanes. 

\subsection{The dual generating function}
\label{sec:dual-gener-funct}
Strictly convex functions have $C^1$ Legendre transforms. This provides a useful technique for proving solutions of Monge--Amp\`ere inequalities are $C^1$. The same technique in the $g$-convex case requires the $g^*$-transform which is defined in terms of the dual generating function. We introduce the dual generating function here and the $g^*$ transform in Section \ref{sec:c1}. We set
\[\Gamma^* = \{(x,y,g(x,y,z)); (x,y,z) \in \Gamma\}.\] The \textit{dual generating function}, $g^*$, is the unique function defined on $\Gamma^*$ by
\begin{align}\label{eq:todiff}
  g(x,y,g^*(x,y,u)) = u.
\end{align}
It follows that if $(x,y,z) \in \Gamma$ then
\begin{align}\label{eq:gtodiff2}
  g^*(x,y,g(x,y,z)) = z.
\end{align}
Further, if $u$ is $g$-convex and $y \in Y_u(x_0)$ then the corresponding support is $g(\cdot,y_0,g^*(x_0,y_0,u(x_0)))$.

We introduce dual conditions on $g^*$.\\
\textbf{A1$^{*}$.} For all $(y,z,q) \in \mathcal{V}$ defined as
\[\mathcal{V} = \{(y,g^*(x,y,u),g_y^*(x,y,u));(x,y,u) \in \Gamma^*\},\]
there is a unique $(x,y,u) \in \Gamma^*$ such that
\begin{align*}
  g^*(x,y,u) = z \quad \text{ and } \quad g_y^*(x,y,u) = q.
\end{align*}
Moreover we define mapping $X(y,z,q),U(y,z,q)$ as the unique $x,u$ that satisfy these equations (cf. \eqref{eq:yzdef1} and \eqref{eq:yzdef2}). As remarked in Section \ref{sec:domain-convexity} A1$^*$ implies uniqueness of the $g$-segment between two points. For this note by differentiating \eqref{eq:gtodiff2} we obtain $\frac{g_y}{g_z}(x,y,z) = -g^*_y(x,y,g(x,y,z))$. The right hand side is injective in $x$ for fixed $y,z$. 

We define dual objects by swapping the roles of $x$ and $y$ as well as $z$ and $u$. In particular $g^*$-convex functions are those defined on $\Omega^*$ with $g^*$ supports, and for a $g^*$-convex $v$ we have the mappings $X_v(\cdot),U_v(\cdot)$ which are analogues of $Y_u,Z_u$. Similarly $g^*$-segments are used to define $g^*$-convex sets. We also define dual conditions A2$^*$, A3$^*$, A4w$^*$. However Trudinger \cite{Trudinger14} showed the conditions A2$^*$ and A3w$^*$ are satisfied provided A2 and A3w are.

\subsection{Main results}
\label{sec:main-results}

\begin{theorem}\label{thm:main}
  Let $g$ be a generating function satisfying A0, A1, A2, A1$^*$, A3w and A4w. Let $\Omega\subset\mathbf{R}^2$ and $u:\Omega \rightarrow \mathbf{R}$ be a $g$-convex function satisfying for $c>0$ and all $E\subset\Omega$
  \begin{align}
    \label{eq:alekssoln}
       |Y_u(E)| \geq c|E|.
  \end{align}
If $\Omega$ is $g$-convex with respect to $Y_u(\Omega) \times Z_u(\Omega)$  then $u$ is strictly $g$-convex. 
\end{theorem}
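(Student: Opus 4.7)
The plan is to argue by contradiction, following the scheme of Trudinger--Wang \cite{TrudingerWang08} for the Monge--Amp\`ere equation, and using Lemma~\ref{lem:diffineq} to substitute for the convex-analytic estimates available there.

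Suppose $u$ is not strictly $g$-convex. Then some $g$-support $g_0 := g(\cdot, y_0, z_0)$ touches $u$ at two distinct points $x_0, x_1 \in \Omega$. Since $\Omega$ is $g$-convex with respect to $Y_u(\Omega) \times Z_u(\Omega)$, the $g$-segment with respect to $(y_0, z_0)$ joining $x_0$ to $x_1$ lies in $\Omega$, and by $g$-convexity of $u$ this whole segment lies in the contact set $\{u = g_0\}$. A standard extraction argument then reduces matters to the case that the contact set is a $g$-segment $L$ with an endpoint $p$ in the interior of $\Omega$. Using A1 and the implicit function theorem, I choose local coordinates at $p$ so that $p = 0$, $g_0(0) = 0$, $Dg_0(0) = 0$, and $L$ emanates from $0$ along the positive $e_1$-axis.

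I then construct a two-parameter family of competitor supports that shrink to $L$. For small $s, t > 0$, define $(y_{s,t}, z_{s,t})$ by $g(0, y_{s,t}, z_{s,t}) = -t$ and $g_x(0, y_{s,t}, z_{s,t}) = s e_2$; by A1 this is well-defined and smooth in $(s,t)$. Setting $v_{s,t} = u - g(\cdot, y_{s,t}, z_{s,t})$, the $g$-section $S_{s,t} = \{v_{s,t} < 0\}$ is $g$-convex, lies (up to sign) in the half-plane $x_2 > 0$, and shrinks to $L$ as $(s,t) \to 0$. Applying Lemma~\ref{lem:diffineq} to $u - g_0$ on a one-sided neighborhood of $L$, where A3w and A4w provide the required sign on the right-hand side, yields a quantitative transverse growth estimate of the form $u - g_0 \gtrsim \operatorname{dist}(\cdot, L)^2$. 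Combined with the definition of $(y_{s,t}, z_{s,t})$, this controls the width of $S_{s,t}$ in the $e_2$-direction as a specific function of $s$ and $t$, while its extent along $e_1$ is bounded below by a fixed fraction of $|L|$.

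The contradiction is extracted by comparing $|S_{s,t}|$ with $|Y_u(S_{s,t})|$ through (\ref{eq:alekssoln}). The map $(s',t') \mapsto (y_{s',t'}, z_{s',t'})$ parametrizes a two-dimensional family of $g$-supports, each realised on some point of $S_{s,t}$ for $0 < s' < s$, $0 < t' < t$, so $|Y_u(S_{s,t})|$ is bounded above by the measure of the image of this parametrization, scaling as a controlled power of $s$ and $t$. The lower bound (\ref{eq:alekssoln}) combined with the lower bound on $|S_{s,t}|$ coming from the non-degenerate length of $L$ and the transverse thickness then produces, at the appropriate relative scaling of $s$ and $t$, a strict inequality inconsistent with $c > 0$.

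The main technical hurdle is establishing the transverse quadratic growth of $u - g_0$ along the degenerate direction of $L$ and converting it into the section-shape bound needed for the scaling argument. In the affine (Monge--Amp\`ere) setting this is immediate from convex analysis, but for general $g$ the support $g_0$ has non-trivial curvature that must be compared against that of $u$ via Lemma~\ref{lem:diffineq}, applied in the non-smooth contact-segment regime through an approximation. A3w is essential for preserving the $g$-convex shape of $S_{s,t}$ under the perturbed family and for ensuring the parametrization $(s',t') \mapsto (y_{s',t'}, z_{s',t'})$ has non-degenerate Jacobian, while A4w controls the $u$-dependence of $A$. Managing these ingredients together is where the bulk of the work lies.
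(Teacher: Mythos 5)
Your outline has two genuine gaps at its core. First, the transverse growth estimate $u-g_0\gtrsim \operatorname{dist}(\cdot,L)^2$ cannot be extracted from Lemma \ref{lem:diffineq}: under A3w, A4w and $h\geq 0$ that lemma yields only $h''\geq -K|h'|$ along $g$-segments, which is a maximum-principle type statement controlling $u-g_0$ from \emph{above} between contact points; it gives no lower bound on how $u-g_0$ separates from $L$ in the transverse direction. Any such lower bound must come from the measure condition \eqref{eq:alekssoln}, which your growth step does not invoke, and indeed no uniform quadratic bound of this kind holds a priori. Second, the measure comparison is logically reversed: exhibiting the two-parameter family $(y_{s',t'},z_{s',t'})$ of supports realised at points of $S_{s,t}$ produces a subset of $Y_u(S_{s,t})$, i.e.\ a \emph{lower} bound on $|Y_u(S_{s,t})|$, whereas contradicting \eqref{eq:alekssoln} requires an \emph{upper} bound on $|Y_u(S_{s,t})|$ against a lower bound on $|S_{s,t}|$. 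Even if you replace this by the natural upper bound (over a strip of width $\epsilon$ around $L$ the component of the "gradient image" along $L$ is $O(\epsilon)$), in two dimensions both $|Y_u(S)|$ and $c|S|$ then scale like $\epsilon$ to first order, so no contradiction appears at the level of your scaling count. This is precisely why the 2D statement is delicate (and false in higher dimensions, cf.\ Pogorelov-type examples): the dimension must enter through a finer mechanism than parameter counting.

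For comparison, the paper's proof supplies exactly this mechanism: along a one-parameter family of $g$-segments parallel to the contact segment at distance $\epsilon$, Lemma \ref{lem:diffineq} plus the equation give, in 2D, a lower bound on the tangential second derivative of $h$ by $C$ times the reciprocal of the transverse second derivative; integrating in $\theta$, applying Jensen's inequality, and then integrating in $\epsilon$ produces $\int_0^\delta C(\epsilon+H)^{-1}\,d\epsilon$ bounded by a trace integral controlled by $\sup|Du|$, forcing the height $H$ below the support to be quantitatively positive. Two further points in your sketch also need repair: the reduction to a contact segment with an endpoint interior to $\Omega$ is not available (the segment may cross the domain, and the result is local), and the passage from Alexandrov solutions to the smooth setting is not a routine mollification — the paper needs the Dirichlet theory and comparison principle of \cite{Trudinger14} to build a $C^3$ barrier $v_h\geq u$ to which the quantitative $C^2$ estimate applies. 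Finally, the asserted $g$-convexity of the sections $S_{s,t}$ is itself a nontrivial consequence of A3w and the domain convexity hypothesis, not something to be taken for granted.
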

\begin{corollary}\label{cor:c1}
  Let $g$ be a generating function satisfying A0, A1, A2, A1$^*$, A3w, and A4w$^*$. Let $\Omega\subset\mathbf{R}^2$ and $u:\Omega \rightarrow \mathbf{R}$ be a $g-$convex function satisfying that for $C>0$ and all $E\subset\Omega$
  \begin{align}
    \label{eq:alekssolnabove}
    |Y_u(E)| \leq C|E|,\\
    Y_u(\Omega) = \Omega^*.
  \end{align}
  If $\Omega^*$ is $g^*$-convex with respect to $\Omega \times u(\Omega)$ then $u \in C^1(\Omega)$. 
\end{corollary}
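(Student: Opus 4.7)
The plan is to pass to a $g^*$-transform $v$ of $u$, apply Theorem~\ref{thm:main} in its dual form to obtain strict $g^*$-convexity of $v$, and then translate this strict $g^*$-convexity back to $C^1$ regularity of $u$, mirroring the classical use of the Legendre transform to pass from strict convexity to differentiability.

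First, I would define the $g^*$-transform $v:\Omega^* \to \mathbf{R}$ by
\[ v(y) = \sup_{x \in \overline{\Omega}} g^*(x,y,u(x)). \]
Standard arguments show that $v$ is $g^*$-convex, with a symmetric support correspondence: $y_0 \in Y_u(x_0)$ with $z_0 = g^*(x_0,y_0,u(x_0)) \in Z_u(x_0)$ if and only if $x_0 \in X_v(y_0)$ with $U_v(y_0) = u(x_0)$. In particular $X_v(\Omega^*) \subset \overline{\Omega}$ and $U_v(\Omega^*) \subset u(\overline{\Omega})$, and the hypothesis $Y_u(\Omega) = \Omega^*$ ensures $X_v$ is defined on all of $\Omega^*$.

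Next, I would translate the upper bound on the $g$-Monge--Amp\`ere measure of $u$ into a lower bound on the $g^*$-Monge--Amp\`ere measure of $v$. Since $g$-convex functions are semiconvex and hence differentiable off a null set, on the set of differentiability the multivalued maps $Y_u$ and $X_v$ are set-theoretic inverses. Thus for any Borel $F \subset \Omega^*$,
\[ |F| = |Y_u(X_v(F))| \leq C|X_v(F)|, \]
yielding $|X_v(F)| \geq C^{-1}|F|$. With this in hand, the dual version of Theorem~\ref{thm:main} is applicable to $v$: the dual hypotheses A0, A1, A2$^*$, A1$^*$, A3w$^*$, A4w$^*$ all hold, since A2$^*$ and A3w$^*$ are automatic from A2 and A3w by the remark in Section~\ref{sec:dual-gener-funct}, and the remaining conditions are explicitly assumed. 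The domain convexity requirement is met because $X_v(\Omega^*) \times U_v(\Omega^*) \subset \Omega \times u(\Omega)$ and $\Omega^*$ is assumed $g^*$-convex with respect to the latter set. Applying Theorem~\ref{thm:main} to $v$ (with the roles of the generating function and its dual interchanged) then yields that $v$ is strictly $g^*$-convex.

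Finally, I would deduce $C^1$ regularity of $u$ from the strict $g^*$-convexity of $v$. If $u$ failed to be differentiable at some $x_0 \in \Omega$, then $Y_u(x_0)$ would contain two distinct points $y_0, y_1 \in \Omega^*$, and the function $g^*(\cdot, x_0, u(x_0))$ would be a common $g^*$-support of $v$ at both $y_0$ and $y_1$, contradicting strict $g^*$-convexity. Hence $Y_u$ is single-valued on $\Omega$, so $u$ is differentiable everywhere, and combined with semiconvexity this upgrades to $u \in C^1(\Omega)$. The main obstacle to executing this plan is the careful bookkeeping required to establish the support correspondence between $u$ and $v$ and, relatedly, the null-set accounting that underlies the measure-theoretic duality $|X_v(F)| \geq C^{-1}|F|$.
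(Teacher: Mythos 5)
Your proposal is correct and follows essentially the same route as the paper: form the $g^*$-transform $v$, establish the support correspondence $y_0\in Y_u(x_0)\Rightarrow x_0\in X_v(y_0)$, convert the upper bound on $|Y_u(\cdot)|$ into the lower bound $|X_v(F)|\geq C^{-1}|F|$ (the paper does the same null-set bookkeeping via the non-differentiability set $E_u$; note only the inclusion $F\subset Y_u(X_v(F)\cap\Omega)$ is needed, not your stated equality), apply the dual form of Theorem \ref{thm:main} using that A2$^*$ and A3w$^*$ follow from A2 and A3w, and conclude by noting that two points in $Y_u(x_0)$ would give $v$ a $g^*$-support touching at two points. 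The only cosmetic difference is that the paper takes the supremum over $\Omega$ rather than $\overline{\Omega}$ and stops at the contradiction with strict $g^*$-convexity, whereas you add the (standard) remark that everywhere-differentiability plus semiconvexity yields $C^1$.
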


Note conditions A4w and A4w$^*$ are always satisfied in the optimal transport case. Indeed Corollary \ref{cor:c1} implies the $C^1$ differentiability of potentials for the optimal transport problem (and subsequently continuity of the optimal transport map) whenever the right hand side of the associated Monge--Amp\`ere type equation is bounded from above, the target is $c^*$-convex, and the A3w condition is satisfied. These conditions are all known to be necessary. Necessity of the $c^*$-convexity is due to Ma, Trudinger, and Wang \cite{MTW05} whilst necessity of A3w is due to Loeper \cite{Loeper09}.

The proof of Theorem \ref{thm:main} is given in Section \ref{sec:strict-convexity-2d}. Finally the proof of Corollary \ref{cor:c1} (which follows from Theorem \ref{thm:main}) is given in Section \ref{sec:c1}. 

\section{Main lemma}
\label{sec:main-lemma}
We make frequent use of a differential inequality for the difference of a $g$-convex function and a $g$-affine function. Similar inequalities are used in \cite{Trudinger14,Trudinger20} and the works  of Kim and McCann \cite[Proposition 4.6]{KimMcCann10} and Guillen and Kitagawa \cite[Lemma 9.3]{GuillenKitagawa17}. 
\begin{lemma}\label{lem:diffineq}
  Let $g$ be a generating function satisfying A0, A1 and A2. Let $y_0 \in \Omega^*,z_0 \in \cap _{x \in \Omega}I_{x,y}$ be given, $\{x_\theta\}$ a $g$-segment with respect to $y_0,z_0$, and $u$ a $C^2$ $g$-convex function. Then
  \[ h(\theta) := u(x_\theta) - g(x_\theta,y_0,z_0),\]
  satisfies
\begin{align}
  \label{eq:maindiffineq} \frac{d^2}{d \theta^2}h(\theta) &\geq [D_{ij}u(x_\theta) - g_{ij}(x_\theta,Y_u(x_\theta),Z_u(x_\theta))]\dot{(x_\theta)}_i\dot{(x_\theta)}_j\\
          \nonumber              &\quad+D_{p_kp_l}A_{ij}\dot{(x_\theta)}_i\dot{(x_\theta)}_jD_kh(\theta)D_lh(\theta)\\
\nonumber  &\quad\quad A_{ij,u}\dot{(x_\theta)}_i\dot{(x_\theta)}_j h(\theta)-  K|h'(\theta)|,
\end{align}
where $K$ depends only on the values of $g$ and its derivatives on $(x_\theta,y_0,z_0)$ and $\dot{x_\theta} = \frac{d}{d\theta}x_\theta$. We've used the shorthand $D_kh(\theta) = u_k(x_\theta)-g_k(x_\theta,y_0,z_0)$. The arguments of $A_{ij,u}$ and $D_{p_kp_l}A_{ij}$ are given in the proof.
\end{lemma}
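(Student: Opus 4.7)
The plan is to compute $h''(\theta)$ by direct chain rule, then trade the raw Hessian comparison $u_{ij}-g_{ij}(x,y_0,z_0)$ for the ``$A$-matrix'' comparison $u_{ij}-g_{ij}(x,Y_u,Z_u)=u_{ij}-A_{ij}(x,u,Du)$ via a second-order Taylor expansion of $A$ in its $(u,p)$ slots, and finally extract the A3w and A4w quadratic forms while packaging all residues into $-K|h'(\theta)|$.

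Concretely, $h'(\theta)=D_kh(\theta)\dot x_\theta^k$ and
\[
h''(\theta)=\bigl[u_{ij}(x_\theta)-g_{ij}(x_\theta,y_0,z_0)\bigr]\dot x_\theta^i\dot x_\theta^j+D_ih(\theta)\,\ddot x_\theta^i.
\]
The acceleration $\ddot x_\theta$ is determined by the $g$-segment ODE: differentiating the relation that $\frac{g_y}{g_z}(x_\theta,y_0,z_0)$ is linear in $\theta$ twice in $\theta$ expresses $\ddot x_\theta$ as a quadratic form in $\dot x_\theta$ whose coefficients depend only on the derivatives of $g$ at $(x_\theta,y_0,z_0)$. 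For the Hessian swap, A1 gives $Y(x_\theta,u_0,p_0)=y_0$ and $Z(x_\theta,u_0,p_0)=z_0$ when $u_0:=g(x_\theta,y_0,z_0)$ and $p_0:=g_x(x_\theta,y_0,z_0)$, hence $g_{ij}(x_\theta,y_0,z_0)=A_{ij}(x_\theta,u_0,p_0)$. Taylor expanding $A_{ij}$ around $(u_0,p_0)$ and inserting $u(x_\theta)-u_0=h$, $Du(x_\theta)-p_0=Dh$ yields
\[
g_{ij}(x_\theta,Y_u,Z_u)-g_{ij}(x_\theta,y_0,z_0)=A_{ij,u}h+A_{ij,p_k}D_kh+\tfrac12 D_{p_kp_l}A_{ij}D_khD_lh+R,
\]
where $R$ collects the cubic and higher-order Taylor remainder. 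The $A_{ij,u}h\,\dot x^i\dot x^j$ piece is exactly the A4w contribution stated in the lemma.

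The A3w term is extracted from the $D_{p_kp_l}A_{ij}$ quadratic form by decomposing $Dh=\alpha\dot x_\theta+\eta_\perp$ with $\eta_\perp\cdot\dot x_\theta=0$ and $\alpha|\dot x_\theta|^2=h'(\theta)$: the $(\eta_\perp,\eta_\perp)$ contribution is nonnegative by A3w, while the cross terms and the $\alpha^2(\dot x,\dot x)$ contribution each carry an explicit factor of $\alpha$ and are therefore $O(|h'|)$. The genuine obstacle is the linear-in-$Dh$ Taylor coefficient $A_{ij,p_k}\dot x^i\dot x^j D_kh$: \emph{a priori} this is only $O(|Dh|)$, and its $\eta_\perp$-component is not controlled by $h'$. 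I expect it to be cancelled precisely by the curvature contribution $D_ih\,\ddot x_\theta^i$. The cancellation comes from a second $g$-segment identity: differentiating $g_x(x_\theta,y_0,z_0)$ twice along the segment and using A2 (which links $g_{xy}$ to the invertible matrix $E$) produces an MTW-style relation showing that the sum of these two linear-in-$Dh$ terms equals $h'(\theta)$ times a bounded coefficient depending only on $g$ at $(x_\theta,y_0,z_0)$. Combining this with routine bounding of the remainder $R$ produces the stated $-K|h'(\theta)|$ error. Carrying out this matching carefully is the bulk of the work and is presumably why the proof is relegated to the appendix.
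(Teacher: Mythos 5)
Your overall plan matches the paper's: compute $h''$ by the chain rule, convert $g_{ij}(x_\theta,y_0,z_0)=A_{ij}(x_\theta,u_0,p_0)$, Taylor expand $A_{ij}$ in $(u,p)$, and absorb everything else into $-K|h'|$. But the one step you leave unproved is precisely the entire content of the lemma. You correctly observe that the linear Taylor coefficient $D_{p_k}A_{ij}(x_\theta,u_0,p_0)\,\dot x^i\dot x^j D_kh$ is not controlled by $h'$ (its component of $Dh$ orthogonal to $\dot x_\theta$ is uncontrolled), and that it must be cancelled by the acceleration term $D_ih\,\ddot x_\theta^i$ up to $O(|h'|)$ --- and then you write ``I expect it to be cancelled'' and defer the matching. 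That matching is the proof. In the paper it is carried out by computing the second-order operator along the $g$-segment: from the segment relation one gets $(\dot x_\theta)_i=g_zE^{m,i}q_m$ (equation \eqref{eq:3qderiv}), and the derivative of $E^{-1}$ along the segment is resolved by the identity \eqref{eq:3invderiv}, $D_{x_j}(E_{ab})=-\tfrac{g_{a,z}}{g_z}E_{jb}+E_{l,b}D_{p_l}g_{aj}$, whose proof requires the further fact $D_{p_j}Y^i=E^{i,j}$. This yields
\begin{align*}
\frac{d^2}{d\theta^2}=(\dot x_\theta)_i(\dot x_\theta)_j\bigl(D_{x_ix_j}-D_{p_k}g_{ij}D_{x_k}\bigr)+(\text{bounded coefficient})\cdot\frac{d}{d\theta},
\end{align*}
so the offending linear term is produced with exactly the right sign and coefficient, the leftover being a multiple of $d/d\theta$, i.e.\ of $h'$. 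Note also that your proposed mechanism --- differentiating $g_x(x_\theta,y_0,z_0)$ twice along the segment --- is not what delivers this; the relevant computation differentiates the defining relation for $g_y/g_z$ and the inverse matrix $E^{-1}$. Without this identity your argument is a plausible outline, not a proof.

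Two smaller points. First, your Taylor expansion with a ``cubic and higher-order remainder $R$'' is the wrong form: $A$ is only $C^2$ in $(u,p)$ under A0, and even formally such an $R$ is cubic in $(h,Dh)$, which are bounded but not small, so it cannot be absorbed into $-K|h'|$. The paper avoids any remainder by using exact mean-value forms, expanding first in $u$ (giving $A_{ij,u}$ evaluated at an intermediate $u_\tau$) and then to second order in $p$ (giving $D_{p_kp_l}A_{ij}$ at an intermediate $p_t$); this is why the lemma states that the arguments of $A_{ij,u}$ and $D_{p_kp_l}A_{ij}$ are specified in the proof. Second, your orthogonal decomposition of $Dh$ to invoke A3w is not needed here: A3w is not a hypothesis of the lemma, which retains the full $D_{p_kp_l}A_{ij}$ quadratic form; that decomposition is exactly the paper's separate observation \eqref{eq:A3equiv}, used only afterwards in the strict convexity argument.
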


The proof can be found in \cite{Trudinger14}. For completeness we include a proof in  Appendix \ref{app:proof}. To use A3w in \eqref{eq:maindiffineq} we need to control $D_{p_kp_l}A_{ij}\xi_i\xi_j\eta_k\eta_l$ for arbitrary $\xi,\eta$. We claim if $\xi,\eta \in \mathbf{R}^n$ are arbitrary then A3w implies 
\begin{equation}
  \label{eq:A3equiv}
 D_{p_kp_l} A_{ij}\xi_i\xi_j\eta_k\eta_l \geq - K |\xi||\eta||\xi\cdot \eta|.
\end{equation}
Here $K$ is a non-negative constant depending only on $\Vert D_{p^2}A\Vert_{C^0}$. To obtain \eqref{eq:A3equiv} from A3w first prove it for arbitrary unit vectors $\xi,\eta$ by using A3w with $\xi$ as given and $\eta$ replaced by $\eta - \xi\cdot\eta \xi$.

Thus when $A3w,A4w$ are satisfied and $h(\theta) \geq 0$ we have \[h'' \geq - K|Du-g_{x}||\dot{x_\theta}||h'| \geq -C|h'|,\] where $C$ depends on $\sup|Du|$ and $|\dot{x_\theta}|.$  This inequality appears in \cite[Lemma 9.3]{GuillenKitagawa17},\cite[pg. 310, pg. 315]{Villani09} and \cite{Trudinger14,Trudinger20}. It implies, amongst other things, estimates on  $h'$ in terms of $\sup |h|$ via the following lemma. 

\begin{lemma}\label{lem:diffseg}
  Let $h \in C^2([a,b])$ be a function  satisfying $h'' \geq -K|h'|$. For $t \in (a,b)$ there holds
  \[ -C_0 \sup_{[a,t]}|h|\leq h'(t) \leq C_1\sup_{[t,b]}|h|,\]
  where $C_0,C_1$ depend on $t-a,b-t$ respectively and $K$.
\end{lemma}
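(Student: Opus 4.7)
The plan is to exploit the fact that the inequality $h'' \geq -K|h'|$ becomes a \emph{linear} differential inequality once the sign of $h'$ is fixed. I would split into cases based on the sign of $h'(t)$, show that $h'$ retains this sign throughout the relevant half-interval, and then integrate the resulting exponential bound on $h'$.

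For the upper bound, assume $h'(t) = \alpha > 0$ (otherwise the bound is trivial with any $C_1 \geq 0$). On any subinterval $[t,s] \subset [t,b]$ on which $h' \geq 0$ we have $|h'| = h'$, so the hypothesis reads $h'' + K h' \geq 0$, equivalently $\bigl(e^{K\sigma}h'(\sigma)\bigr)' \geq 0$. Set
\[ \tau = \sup\{s \in [t,b] : h' > 0 \text{ on } [t,s]\}. \]
Monotonicity gives $h'(\tau)e^{K\tau} \geq \alpha e^{Kt} > 0$, so by continuity $h'(\tau) > 0$, which forces $\tau = b$. Hence $h'(s) \geq \alpha e^{-K(s-t)}$ on all of $[t,b]$, and integrating from $t$ to $b$ produces
\[ h(b) - h(t) \geq \frac{\alpha}{K}\bigl(1 - e^{-K(b-t)}\bigr). \]
Estimating the left side by $2\sup_{[t,b]}|h|$ yields $\alpha \leq C_1\sup_{[t,b]}|h|$ with $C_1 = 2K/(1-e^{-K(b-t)})$, as required.

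The lower bound is entirely symmetric. If $h'(t) = -\beta < 0$, then on any subinterval $[s,t] \subset [a,t]$ with $h' \leq 0$ the inequality becomes $h'' - Kh' \geq 0$, i.e.\ $\bigl(e^{-K\sigma}h'(\sigma)\bigr)' \geq 0$. The corresponding monotonicity (run backwards from $t$) shows $h'$ cannot vanish before reaching $a$, so $h'(s) \leq -\beta e^{-K(t-s)}$ on $[a,t]$, and integrating from $a$ to $t$ gives $\beta \leq C_0\sup_{[a,t]}|h|$ with $C_0 = 2K/(1-e^{-K(t-a)})$.

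The only real subtlety is the no-sign-change argument, which is handled by the monotonicity of $e^{\pm K\sigma}h'(\sigma)$; once that is in place the rest is a one-line ODE comparison followed by integration. The constants degenerate as $t \to a$ or $t \to b$, which is consistent with the expected loss of control on $h'$ at the endpoints, and in the limit $K \to 0$ one recovers the familiar $\sup|h'| \lesssim \sup|h|/\mathrm{dist}(t,\partial[a,b])$ bound for convex functions.
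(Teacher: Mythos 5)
Your proof is correct and follows essentially the same route as the paper's: fix the sign of $h'$, observe that the inequality then integrates to the exponential bound $e^{K\sigma}h'(\sigma)$ (equivalently $\log h'$) being monotone, use this to rule out a sign change on the relevant half-interval, and integrate to compare $h'(t)$ with the oscillation of $h$. The only cosmetic point is that your constants $2K/(1-e^{-K(b-t)})$, $2K/(1-e^{-K(t-a)})$ should be read as their limits when $K=0$ (or one takes $K>0$ without loss of generality).
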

\begin{proof}
  First note if $h'(\tau) = 0$ at any $\tau \in (a,b)$ then $h'(a) \leq 0$. To see this assume $\tau$ is the infimum of points with $h'(\tau) = 0$. By continuity if $\tau=a$ we are done. Otherwise $h'$ is single signed on $(a,\tau)$ and if $h'<0$ on this interval then again by continuity we are done. Thus we assume $h'>0$ on $(a,\tau)$. The  inequality $h'' \geq -K|h'|$ implies $\frac{d}{dt}\log(h'(t)) \geq -K$ on $(a,\tau)$. Subsequently for $a<t_1<t_2<\tau$ integration gives
  \begin{equation}
    \label{eq:diffint}
        h'(t_1) \leq e^{K(t_2-t_1)}h'(t_2),
  \end{equation}
  and sending $t_1 \rightarrow a, t_2 \rightarrow \tau$ gives $h'(a) \leq 0$.

  Now to prove the inequality in the lemma let's deal with the upper bound first. We assume $h' >0$ on $[t,b]$ otherwise the argument just given implies $h'(t) \leq 0$. We obtain \eqref{eq:diffint} for $t_1=t$ and $t_2 \in (t,b)$. Integrating with respect to $t_2$ from $t$ to $b$ establishes the result. The other inequality follows by applying the same argument to the function $k$ defined by $k(t):=h(-t)$. 
\end{proof}

\section{Strict convexity in 2D}
\label{sec:strict-convexity-2d}
In this section we present the proof of Theorem \ref{thm:main}. The proof follows closely the proof of Trudinger and Wang \cite[Remark 3.2]{TrudingerWang08} who obtained the same result in the Monge--Amp\`ere case. The key ideas of our proof will be more transparent if the reader is familiar with their proof. There are two key steps: First we obtain a quantitative $g$-convexity estimate for $C^2$ solutions of $\det [D^2u-A(\cdot,u,Du)] \geq c$ (importantly the estimate is independent of bounds on second derivatives). Then we obtain a convexity estimate for Alexandrov solutions via a barrier argument. 
\begin{proof}[Theorem \ref{thm:main}.] \textit{Step 1. Quantitative convexity for $C^2$ solutions}\\
  Initially we assume $u$ is $C^2$. Let $g(\cdot,y_0,z_0)$ satisfy $u \geq g(\cdot,y_0,z_0)$ for $y_0 \in Y_u(\Omega),z_0 \in Z_u(\Omega)$. Assume for some $\sigma \geq 0$ there is distinct $x_{-1},x_{1} \in \Omega$ with
  \begin{align}
    \label{eq:sigest1} u(x_{-1}) \leq g(x_{-1},y_0,z_0)+\sigma,\\
    \label{eq:sigest2}    u(x_1) \leq g(x_1,y_0,z_0)+\sigma.
  \end{align}

  Let $\{x_\theta\}_{\theta \in [-1,1]}$ denote the $g$-segment with respect to $y_0,z_0$ that joins $x_{-1}$ to $x_{1}$ and set
\[ h_\sigma(x) = u(x) - g(x,y_0,z_0) - \sigma.\]
  We use the shorthand $h_\sigma(\theta) = h_\sigma(x_\theta)$. Lemma \ref{lem:diffineq} along with A3w and A4w yields $h''_0(\theta) \geq -K |h_0'(\theta)|$ with the same inequality holding for $h_\sigma$. Hence, via the maximum principle, $h_\sigma$ is less than or equal to its value at the end points. Thus
  \begin{align}
  0 \geq h_\sigma(\theta) \geq \inf_{\theta \in [-1,1]} h_\sigma(\theta) =:-H.\label{eq:heightest}
  \end{align}
  The convexity estimate we intend to derive is $H \geq C > 0$ where $C$ depends only on $|x_0-x_1|, \ \Vert u \Vert_{C^1}, \ g, \ c$. We use $C$ to indicate any positive constant depending only on these quantities.  

  Now, via semi-convexity, $g$-convex functions are locally Lipschitz. Thus for $\theta \in [-3/4,3/4]$ and $\xi \in \mathbf{R}^n$ sufficiently small, \eqref{eq:heightest} implies
  \begin{align}
        \label{eq:lipest1}         h_\sigma(x_\theta+\xi)  &\leq h_\sigma(x_\theta) + C|\xi| \leq C|\xi|,\\
    \label{eq:lipest2}         h_\sigma(x_\theta+\xi)  &\geq h_\sigma(x_\theta) - C|\xi| \geq -H - C|\xi|.
  \end{align}

  We let $\eta_\theta$ be a continuous unit normal vector field to the $g$-segment $\{x_\theta\}$. Fix $\delta > 0$ so that $x_{-1/2}+\delta \eta_{-1/2}$ and $x_{1/2}+\delta \eta_{1/2}$ lie in $\Omega$. For $\epsilon \in [0,\delta]$  let $\{x^\epsilon_\theta\}_{\theta \in [-1/2,1/2]}$ be the $g$-segment with respect to $y_0,z_0$ joining $x_{-1/2}+\epsilon \eta_{-1/2}$ to $x_{1/2}+\epsilon \eta_{1/2}$.  Using Lemma \ref{lem:diffseg} for $\theta \in [-1/4,1/4]$ combined with \eqref{eq:lipest1} and \eqref{eq:lipest2} implies
  \begin{align}
  -C(\epsilon+H) \leq \frac{d}{d\theta}h_{\sigma}(x_\theta^\epsilon) \leq C(\epsilon+H). \label{eq:diffineq2}
  \end{align}
  Here we have used that $|x_\theta - x_\theta^\epsilon| < C \epsilon$ for a Lipschitz constant independent of $\theta$.   

  This implies
  \begin{align}
\label{eq:tosub1}    \int_{-1/4}^{1/4}\frac{d^2}{d\theta^2}h_{\sigma}(x_\theta^\epsilon) \ d \theta \leq C(\epsilon+H),
  \end{align}
  and we come back to this in a moment. For now note that, since
  \[ \det [D^2u - A(\cdot,u,Du)]\geq  c\inf\det E > 0,\] for any two orthogonal unit vectors $\xi,\eta$
 \[ [D_{\xi\xi}u-g_{\xi \xi}(x,Y_u(x),Z_u(x))][D_{\eta\eta}u-g_{\eta \eta}(x,Y_u(x),Z_u(x))] \geq C.\]
 In particular for $\eta_\theta^\epsilon$ a choice of unit normal vector field orthogonal to $\dot{x}_\theta^\epsilon$ (continuous in $\theta,\epsilon$) we have
 \begin{align*}
   C^{-1}&[D_{\dot{x}_\theta^\epsilon\dot{x}_\theta^\epsilon}u-g_{\dot{x}_\theta^\epsilon \dot{x}_\theta^\epsilon}(x,Y_u(x),Z_u(x))] \\
   &\geq |\dot{x}_\theta^\epsilon|^{2} [D_{\eta_\theta^\epsilon\eta_\theta^\epsilon}u-g_{\eta_\theta^\epsilon \eta_\theta^\epsilon}(x,Y_u(x),Z_u(x))]^{-1}.
 \end{align*}
  Employing this and \eqref{eq:diffineq2} in Lemma \ref{lem:diffineq}  gives
  \begin{align}
\label{eq:tosub2}   \frac{d^2}{d\theta^2}h_{\sigma}(x_\theta^\epsilon) &\geq C|\dot{x}_\theta^\epsilon|^2\big([D_{\eta_\theta^\epsilon \eta_\theta^\epsilon}u(x_\theta^\epsilon)-g_{\eta_\theta^\epsilon \eta_\theta^\epsilon}(x_\theta^\epsilon,Y_u(x_\theta^\epsilon),Z_u(x_\theta^\epsilon))]\big)^{-1}\\&\quad\quad - C(\epsilon+H),\nonumber
  \end{align}
  where initially this holds for $h_0$, and thus for $h_\sigma$. Note that by \eqref{eq:A3equiv} the $D_p^2A$ term in Lemma \ref{lem:diffineq} is bounded below by $-C|h'|$ and subsequently controlled by \eqref{eq:diffineq2}.

  Substituting \eqref{eq:tosub2} into \eqref{eq:tosub1} we obtain
  \begin{align*}
    \int_{-1/4}^{1/4} |\dot{x}_\theta^\epsilon|^2\big([D_{ij}u(x_\theta^\epsilon)-g_{ij}(x_\theta^\epsilon)](\eta_\theta^\epsilon)_i (\eta_\theta^\epsilon)_j\big)^{-1} \ d \theta \leq C(\epsilon+H),
  \end{align*}
 where we omit that $g$ is evaluated at $(x_\theta^\epsilon,Y_u(x_\theta^\epsilon),Z_u(x_\theta^\epsilon))$. An application of Jensen's inequality implies
  \begin{align}
   \label{eq:inttobound} &\int_0^\delta \int_{-1/4}^{1/4} |\dot{x}_\theta^\epsilon|^{-2}[D_{ij}u(x_\theta^\epsilon)-g_{ij}(x_\theta^\epsilon)](\eta_\theta^\epsilon)_i (\eta_\theta^\epsilon)_j \ d \theta \ d\epsilon \\\quad&\geq C \int_0^\delta \Big(\int_{-1/4}^{1/4}  |\dot{x}_\theta^\epsilon|^2\big([D_{ij}u(x_\theta^\epsilon)-g_{ij}(x_\theta^\epsilon)](\eta_\theta^\epsilon)_i (\eta_\theta^\epsilon)_j\big)^{-1}\ d \theta \Big)^{-1} \ d\epsilon \nonumber \\
\label{eq:invint}    &\quad \geq  \int_0^\delta \frac{C}{\epsilon+H} \ d \epsilon. 
  \end{align}
This is the crux of the proof complete: the only way for the final integral to be bounded is if $H$ is bounded away from 0. We're left to show the integral \eqref{eq:inttobound} is bounded in terms of the allowed quantities, and approximate when $u$ is not $C^2$.

   To bound \eqref{eq:inttobound} use that $\det E \neq 0$ implies $|\dot{x}_\theta^\epsilon|$ is bounded below by a positive constant depending on $|x_1-x_0|$ and $g$. This gives the estimate
  \begin{align*}
    &\int_0^\delta \int_{-1/4}^{1/4} |\dot{x}_\theta^\epsilon|^{-2}[D_{ij}u(x_\theta^\epsilon)-g_{ij}(x_\theta^\epsilon)](\eta_\theta^\epsilon)_i (\eta_\theta^\epsilon)_j \ d \theta \ d\epsilon\\
   & \leq C  \int_0^\delta \int_{-1/4}^{1/4}  [D_{ij}u(x_\theta^\epsilon)-g_{ij}(x_\theta^\epsilon)](\eta_\theta^\epsilon)_i (\eta_\theta^\epsilon)_j \ d \theta \ d\epsilon\\
   & \leq  C  \int_0^\delta \int_{-1/4}^{1/4} \sum_i D_{ii}u(x_\theta^\epsilon)-g_{ii}(x_\theta^\epsilon) \ d \theta \ d \epsilon.
  \end{align*}
  The final line is obtained using positivity of $D_{ij}u-g_{ij}$ and is bounded in terms of $\Vert g \Vert_{C^2}$ and $\sup |Du|$ (compute the integral in the Cartesian coordinates and note the Jacobian for this transformation is bounded). Thus returning to \eqref{eq:inttobound} and \eqref{eq:invint} we obtain $H > C$ where $C$ depends on the stated quantities. 

  \textit{Step 2: Convexity estimates for Alexandrov solutions via a barrier argument}\\
  We extend to Alexandrov solutions via a barrier argument. Suppose $u$ is an Alexandrov solution of \eqref{eq:alekssoln} that is not strictly convex. There is a support $g(\cdot,y_0,z_0)$ touching at two points $x_1,x_{-1}$. Using \cite[Lemma 2.3]{Trudinger14} we also have $u \equiv g(\cdot,y_0,z_0)$ along the $g$-segment joining these points (with respect to $y_0,z_0$). Balls with sufficiently small radius are $g$-convex. This follows because, as noted in \cite[\S 2.2]{LiuTrudinger16}, $g$-convexity requires the boundary curvatures minus a function depending only on $\Vert g \Vert_{C^3}$ are positive. Thus we  assume $x_1,x_{-1}$ are sufficiently close to ensure that $B$, the ball with radius $|x_{1}-x_{-1}|/2$ and centre $(x_1+x_{-1})/2$ is $g$-convex with respect to $Y_u(\Omega)\times Z_u(\Omega)$. Let $\epsilon > 0$ be given, and let $u_h$ be the mollification of $u$ with $h$ taken small enough to ensure $|u-u_h| < \epsilon/2$ on $\partial B.$ The Dirichlet theory for GJE, \cite[Lemma 4.6]{Trudinger14}, yields a $C^3$ solution of
  \begin{align*}
    \det DY(\cdot,v_h,Dv_h) = c/2 \text{ in }B,\\
    v_h = u_h+\epsilon \text{ on } \partial B,
  \end{align*}
  satisfying an estimate $|Dv_h| \leq K$ where $K$ depends on the local Lipschitz constant of $u$. Since $v_h \geq u$ on $\partial B$ the comparison principle, (\cite[Lemma 4.4]{Trudinger14}) implies  $v_h \geq u$ in $B$. Thus we can apply our previous argument and obtain strict $g$-convexity of $v_h$ provided we note \eqref{eq:sigest1} and \eqref{eq:sigest2} are satisfied for $\sigma = 2\epsilon$. Hence at $x_{\theta_\epsilon}$ a point on the $g$-segment where the infimum defining $H$ is obtained we have 
  \[ u(x_{\theta_\epsilon}) - g(x_{\theta_\epsilon},y_0,z_0) - 2\epsilon \leq v_h(x_{\theta_\epsilon}) - g(x_{\theta_\epsilon},y_0,z_0) - 2\epsilon \leq -H < 0.\]
  As $\epsilon \rightarrow 0$ we contradict that $g(\cdot,y_0,z_0)$ supports $u$.  
 \end{proof}

\section{$C^1$ regularity}
\label{sec:c1}
 For a $g$-convex function defined on $\Omega$ with $Y_u(\Omega) = \Omega^*$ its $g^*$\textit{-transform} is the function defined on $\Omega^*$ by
 \[ v(y):= \sup_{x \in \Omega}g^*(x,y,u(x)). \]
 We list a few essential properties. 
Let us suppose $y_0 \in Y_u(x_0)$. This means
  \begin{align*}
   u(x) \geq g(x,y_0,g^*(x_0,y_0,u(x_0))),
  \end{align*}
  taking $g^*(x,y_0,\cdot)$ of both sides and using $g^*_u<0$ yields
  \begin{align*}
    g^*(x,y_0,u(x)) \leq g^*(x_0,y_0,u(x_0)),
  \end{align*}
  so that $v(y_0) = g^*(x_0,y_0,u(x_0))$. The definition of $v$ implies for other $y\in\Omega^*$ we have $v(y) \geq g^*(x_0,y,u(x_0))$. Thus $g^*(x_0,\cdot,u(x_0))$ is a $g^*$-support at $y_0$. Which is to say $x_0 \in X_v(y_0)$.

We use this as follows. Suppose in addition $u$ satisfies that for all $E \subset \Omega$
\begin{align*}
  |Y_u(E)| \leq c^{-1}|E|.
\end{align*}
Take $A \subset \Omega^*$ and let $E_u$ denote the measure $0$ set of points where $u$ is not differentiable. Necessarily $A\setminus Y_u(E_u) = Y_u(E)$ for  some $E\subset \Omega$. Our above reasoning implies $E \subset X_v(Y_u(E))$. Hence 
\begin{align}
 \label{eq:dualineq} |X_v(A)|\geq |X_v(A\setminus Yu(E_u))| \geq |E| \geq c|A|.
\end{align}

Corollary \ref{cor:c1} follows: Let $u$ be the function given in Corollary \ref{cor:c1} and $v$ its $g^*$ transform defined on $\Omega^*$. Theorem \ref{thm:main} holds in the dual form, that is, provided the relevant hypothesis are changed to their starred equivalents, Theorem \ref{thm:main} implies strict $g^*$-convexity. Thus the hypothesis of Corollary \ref{cor:c1} along with \eqref{eq:dualineq} allow us to conclude $v$ is strictly $g^*$-convex. 

  Suppose for a contradiction $u$ is not $C^1$. Then for some $x_0$ the set $Y_u(x_0)$ contains two distinct points, say $y_0,y_1$. Our above working implies $g^*(x_0,\cdot,u(x_0))$ is a support touching at $y_0,y_1$. This contradicts strict $g^*$-convexity and proves the corollary.  

  \appendix
  \section{Proof of main lemma}
  \label{app:proof}
  In this appendix we provide the proof of Lemma \ref{lem:diffineq}. 
  \begin{proof}
  We first compute a differentiation formula for second derivatives along $g$-segments. We suppose
  \begin{equation}
    \label{eq:gseglemdef} \frac{g_y}{g_z}(x_\theta,y_0,z_0) = \theta q_1+(1-\theta)q_0,
  \end{equation}
and set $q = q_1-q_0$. We begin with a formula for first derivatives. Since
  \begin{equation}
    \label{eq:3chain}
     \frac{d}{d\theta} = (\dot{x_\theta})_iD_{x_i}
  \end{equation}
  we need to compute $(\dot{x_\theta})_i$. Differentiate \eqref{eq:gseglemdef} with respect to $\theta$ and obtain\footnote{We use the convention that subscripts before the comma denote differentiation with respect to $x$, and subscripts after the comma (which are not $z$) denote differentiation with respect to $y$.}
  \[\left[\frac{g_{i,m}}{g_z}-\frac{g_{i,z}g_{,m}}{g_z^2}\right](\dot{x_\theta})_i = q_m,\]
  from which it follows that
  \[ (\dot{x_\theta})_i = g_zE^{m,i}q_m,\]
  where $E^{m,i}$ denotes the m,i$^{\text{th}}$ entry of $E^{-1}$. Thus \eqref{eq:3chain} becomes
  \begin{equation}
    \label{eq:3qderiv}
     \frac{d}{d\theta} = g_zE^{m,i}q_mD_{x_i}.
  \end{equation}
  Using this expression to compute second derivatives we have
  \begin{align*}
  \frac{d^2}{d\theta^2}&= g_zE^{n,j}D_{x_j}(g_zE^{m,i}D_{x_i})q_mq_n\\
                  &= g_{z}^2E^{n,j}E^{m,i}q_mq_nD_{x_ix_j} +g_z^2q_mq_nE^{n,j}D_{x_j}(E^{m,i})D_{x_i}\\
    &\quad\quad+ g_zg_{j,z}E^{n,j}E^{m,i}q_mq_nD_{x_i}.
  \end{align*}
  The formula for differentiating an inverse yields
  \begin{align}
      \label{eq:3tosub}   \frac{d^2}{d\theta^2} = (\dot{x_\theta})_i(\dot{x_\theta})_j&D_{x_ix_j} -g_z^2q_mq_nE^{n,j}E^{m,a}D_{x_j}(E_{ab})E^{b,i}D_{x_i}\\&+ g_zg_{j,z}E^{n,j}E^{m,i}q_mq_nD_{x_i}.\nonumber
  \end{align}
  Now compute
  \begin{align}
    D_{x_j}(E_{ab}) &= D_{x_j}\left[g_{a,b}-\frac{g_{a,z}g_{,b}}{g_z}\right] \nonumber\\
    &= g_{aj,b}-\frac{g_{aj,z}g_{,b}}{g_z}-\frac{g_{a,z}g_{j,b}}{g_z}+\frac{g_{j,z}g_{a,z}g_{,b}}{g_z^2}\nonumber\\
    &= -\frac{g_{a,z}}{g_z}E_{jb} +E_{l,b}D_{p_l}g_{aj} \label{eq:3invderiv}.
  \end{align}
  Here we have used that
  \[ E_{l,b}D_{p_l}g_{aj}(\cdot,Y(\cdot,u,p),Z(\cdot,u,p)) = g_{aj,b}-\frac{g_{aj,z}g_{,b}}{g_z}, \]
  which follows by computing $D_{p_l}g_{aj}$, differentiating \eqref{eq:yzdef1} with respect to $p$ to express  $Z_p$ in terms of $Y_p$, and employing $E^{i,j} = D_{p_j}Y^i$ (which is obtained via calculations similar to those for \eqref{eq:yallderiv}).

   Substitute \eqref{eq:3invderiv} into \eqref{eq:3tosub} to obtain
  \begin{align*}
   \frac{d^2}{d\theta^2} &= (\dot{x_\theta})_i(\dot{x_\theta})_j D_{x_ix_j} -g_z^2q_mq_nE^{n,j}E^{m,a}E_{l,b}D_{p_l}g_{aj}E^{b,i}D_{x_i}\\
               &\quad\quad+[g_zg_{a,z}E^{n,j}E^{m,a}E_{j,b}E^{b,i}D_{x_i+}g_zg_{j,z}E^{n,j}E^{m,i}D_{x_i}]q_mq_n\\
    &= (\dot{x_\theta})_i(\dot{x_\theta})_j D_{x_ix_j} -g_z^2q_mq_nE^{n,j}E^{m,a}D_{p_i}g_{aj}D_{x_i}\\
               &\quad\quad+[g_zg_{a,z}E^{n,i}E^{m,a}D_{x_i+}g_zg_{j,z}E^{n,j}E^{m,i}D_{x_i}]q_mq_n\\
    &= (\dot{x_\theta})_i(\dot{x_\theta})_j(D_{x_i,x_j}-D_{p_k}g_{ij}D_{x_k})\\
               &\quad\quad+g_{j,z}\Big(E^{m,j}q_m\frac{d}{d\theta}+E^{n,j}q_n\frac{d}{d\theta}\Big),
  \end{align*}
  where in the last equality we swapped  the dummy indices $i$ and $a$ on the second term to allow us to collect like terms and also used \eqref{eq:3qderiv}.

  Now let's use this identity to compute $h''(\theta)$. We have 
  \begin{align*}
    h''(\theta) &= \big[D_{ij}u(x_{\theta})-g_{ij}(x_{\theta},y_0,z_0)\\
           &\quad-D_{p_k}g_{ij}(x_{\theta},y_0,z_0)(D_ku(x_\theta)-D_kg(x_{\theta},y_0,z_0))\big](\dot{x_\theta})_i(\dot{x_\theta})_j \\
           &\quad\quad+g_{j,z}(E^{m,j}q_mh'+E^{n,j}q_nh'). 
  \end{align*}
  Terms on the final line are bounded below by $-K|h'(\theta)|$. Now after adding and subtracting $g_{ij}(x_{\theta},y,z)$ for $y=Y_u(x_\theta),z=Z_u(x_\theta)$ we have
        \begin{align*}
          h''(\theta) &\geq  \big[D_{ij}u(x_{\theta}) -g_{ij}(x_{\theta},y,z)\big](\dot{x_\theta})_i(\dot{x_\theta})_j +  [g_{ij}(x_{\theta},y,z)-g_{ij}(x_{\theta},y_0,z_0)\\
                 &\quad-D_{p_k}g_{ij}(x_{\theta},y_0,z_0)(D_ku(x_\theta)-D_kg(x_{\theta},y_0,z_0))\big](\dot{x_\theta})_i(\dot{x_\theta})_j \\
           &\quad\quad-K|h'(\theta)|.
        \end{align*}
        Set $u_0 = g(x_{\theta},y_0,z_0), \ u_1=u(x_\theta)$ $p_0 = g_x(x_{\theta},y_0,z_0),$ and $ p_1 = Du(x_\theta)$. Then rewriting in terms of the matrix $A$ we have
\begin{align*}
  h''(\theta) &\geq  \big[D_{ij}u(x_{\theta}) -g_{ij}(x_{\theta},y,z)\big](\dot{x_\theta})_i(\dot{x_\theta})_j  +\big[A_{ij}(x_{\theta},u_1,p_1)-A_{ij}(x_{\theta},u_0,p_0)\\
         &\quad-D_{p_k}A_{ij}(x_{\theta},u_0,p_0)(p_1-p_0)\big](\dot{x_\theta})_i(\dot{x_\theta})_j -K|h'(\theta)|\\
         &= \big[D_{ij}u(x_{\theta}) -g_{ij}(x_{\theta},y,z)\big](\dot{x_\theta})_i(\dot{x_\theta})_j  + A_{ij,u}(x_\theta,u_\tau,p)(u_1-u_0)(\dot{x_\theta})_i(\dot{x_\theta})_j\\
         &\quad+\big[A_{ij}(x_{\theta},u_0,p_1)-A_{ij}(x_{\theta},u_0,p_0)\\
         &\quad\quad-D_{p_k}A_{ij}(x_{\theta},u_0,p_0)(p_1-p_0)\big](\dot{x_\theta})_i(\dot{x_\theta})_j-K|h'(\theta)|
        \end{align*} 
        Here $u_\tau = \tau u +(1-\tau)u_0$ results from a Taylor series. Another Taylor series for $f(t):= A_{ij}(x_{\theta},u_0,t p_1+(1-t)p_0)$ and we obtain
        \begin{align*}
           h''(\theta) &\geq  \big[D_{ij}u(x_{\theta}) -g_{ij}(x_{\theta},y,z)\big](\dot{x_\theta})_i(\dot{x_\theta})_j  + A_{ij,u}(u_1-u_0)(\dot{x_\theta})_i(\dot{x_\theta})_j\\ &-K|h'(\theta)| + D_{p_kp_l}A_{ij}(x_{\theta},u_0,p_t)(\dot{x_\theta})_i(\dot{x_\theta})_j(p_1-p_0)_k(p_1-p_0)_l. 
        \end{align*}
        This is the desired formula.
\end{proof}

\bibliographystyle{plain}
\bibliography{bib201016C1Regularity2DRevisited}

\end{document}